\newtheorem{theorem}{Theorem}
\newtheorem{corollary}[theorem]{Corollary}
\theoremstyle{remark}
\newtheorem{remark*}[theorem]{Remark*}
\begin{document}

\title[Zeros  of partial sums of the Riemann zeta-function]
{Zeros of partial sums of the Riemann zeta-function}

\author[Gonek]{S. M. Gonek*}  

\thanks{*Research of the first author was partially supported
by NSF grant   DMS-0653809.}

\address{Department of Mathematics \\
University of Rochester \\
Hylan Hall \\
Rochester, NY 14627, USA}

\email{gonek@math.rochester.edu}
\email{ledoan@math.rochester.edu}
\author[Ledoan]{A. H. Ledoan}



\subjclass[2000]{Primary 11M06, 11M26; Secondary 11M41}



\begin{abstract}
We investigate the distribution of the zeros of partial sums  
of the Riemann zeta-function, $\sum_{n\leq X}n^{-s}$, estimating
the number of zeros up to height $T$, the number of  zeros to
the right of a given vertical line, and other aspects of their horizontal 
distribution.
\end{abstract}

\maketitle

\thispagestyle{empty}


Although a great deal is  known and conjectured about the distribution of zeros 
of the Riemann zeta-function,  
little is known about the  zeros of its partial sums 
\[
F_X(s)=\sum_{n\leq X} n^{-s}.
\]
Here  $s= \sigma+it$  denotes a complex variable  
and    $X$ is at least $2$. 
Exceptions  are the works of P. Tur\'{a}n~\cite{Turan1948}, \cite{Turan1959}, 
\cite{Turan1960}, \cite{Turan1963}, N. Levinson~\cite{Levinson1973}, 
S. M. Voronin~\cite{Voronin1974}, and H. L. Montgomery~\cite{Montgomery1983},
and  numerical studies by R. Spira~\cite{Spira}  
and, more recently, P. Borwein \emph{et al.}~\cite{Bor}.   
Our goal here is to extend these investigations.   

 We write $\rho_X=\beta_X+i\gamma_X$ for a typical  zero  of $F_X(s)$.
 The  number of these up to height $T$ we denote  by $ N_X(T) $, and 
  the number  of these with $\beta_X \geq \sigma$   by $ N_X(\sigma, T)$.  
 We follow the convention that if $T$
is the ordinate of a zero, then $ N_X(T)$, say, is defined as 
$\lim_{\epsilon \to 0^{+}} N_X(T+\epsilon)$.

There are two natural  ways to pose questions about $N_X(T)$, $N_X(\sigma, T)$,
 and the distribution of the zeros generally.
 We can fix an  $X$  and consider zeros with $0<\gamma_X \leq T$  and let $T$ 
 tend  to infinity,  or we can ask for results that are uniform as $X$ and $T$ both  tend  to infinity.  Here we will be concerned with the latter sort of question.

Our first theorem collects together a number of  known results.

\begin{theorem}\label{thm 1}
The zeros of $F_X(s)$ lie in the strip $\alpha<\sigma<\beta$,
where $\alpha$ and $\beta$ are the unique   solutions of the
equations $1+2^{-\sigma}+\dots+(X-1)^{-\sigma}=X^{-\sigma}$
and $2^{-\sigma}+3^{-\sigma}+\dots+X^{-\sigma}=1$, respectively.
In particular, $\alpha>-X$ and $\beta<1.72865$.
For $X$ sufficiently large $F_X(s)$ has
no zeros in the half-plane
\begin{equation*}
\sigma   \geq 1 + \frac{2\log\log X}{\log X}.
\end{equation*}
Moreover, for any constant $c$ with
$c>4/\pi-1$ there exists a number $X_0(c)$ such that if  $X \geq X_0(c)$,
then $F_X(s)$ has at most a finite number of zeros in the half-plane
\begin{equation*}
\sigma  >1 + \frac{c\log\log X}{\log X}.
\end{equation*}
\end{theorem}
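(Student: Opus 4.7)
The plan is to treat the four assertions of the theorem in turn, since they are essentially independent.

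\emph{Location of the zeros.} I would first establish the strip inclusion via the triangle inequality. The function $h(\sigma) := \sum_{n=2}^X n^{-\sigma}$ is continuous and strictly decreasing from $+\infty$ to $0$, so $h(\sigma)=1$ has a unique solution $\beta$; for any $\sigma > \beta$ we then have $|F_X(s)| \geq 1 - h(\sigma) > 0$. Symmetrically, after splitting off the term $X^{-s}$ and letting $g(\sigma) := \sum_{n=1}^{X-1}(X/n)^\sigma$, which is strictly increasing from $0$ to $+\infty$, we obtain a unique $\alpha$ with $g(\alpha)=1$, and for $\sigma < \alpha$ one has $|F_X(s)| \geq X^{-\sigma} - \sum_{n=1}^{X-1}n^{-\sigma} > 0$.

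\emph{Explicit numerical bounds.} The bound $\beta < 1.72865$ follows from $h(\sigma) < \zeta(\sigma)-1$, which forces $\zeta(\beta) > 2$ and hence puts $\beta$ below the unique solution of $\zeta(\sigma)=2$. For $\alpha > -X$ I would check that $g(-X) < 1$, i.e.\ $\sum_{n=1}^{X-1} n^X < X^X$, which follows from the integral comparison $\sum_{n=1}^{X-1} n^X \leq \int_1^X t^X\,dt < X^X$.

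\emph{The region $\sigma \geq 1 + 2\log\log X/\log X$.} Here I would write $F_X(s) = \zeta(s) - R_X(s)$ with $R_X(s) = \sum_{n>X} n^{-s}$ and combine the Dirichlet-series lower bound $|\zeta(s)| \geq 1/\zeta(\sigma) = (\sigma-1)(1+O(\sigma-1))$ (from absolute convergence of $1/\zeta(s) = \sum \mu(n)n^{-s}$) with the triangle bound $|R_X(s)| \leq X^{1-\sigma}/(\sigma-1) + O(X^{-\sigma})$. With $\sigma-1 = 2\log\log X/\log X$ one has $X^{1-\sigma} = (\log X)^{-2}$, and the first estimate $|\zeta(s)| \gtrsim 2\log\log X/\log X$ dominates the second $|R_X(s)| \lesssim 1/(2\log X\log\log X)$ for large $X$, giving $F_X(s) \neq 0$.

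\emph{The sharper half-plane $\sigma > 1 + c\log\log X/\log X$ with $c > 4/\pi - 1$.} This last assertion is Montgomery's 1983 theorem and is by far the deepest; the crude method above would at best yield $c > 1$, far from $4/\pi - 1 \approx 0.2732$. The plan would be to refine both sides of the previous estimate: replace $R_X$ by its Euler--Maclaurin expansion $R_X(s) = X^{1-s}/(s-1) + O(|s|X^{-\sigma})$, whose leading term gains a factor $|t|^{-1}$ that confines any zero in this region to a range of $|t|$ bounded in $X$; and replace the pointwise bound $|\zeta(s)| \geq 1/\zeta(\sigma)$ by a sharper lower bound for $|\zeta(\sigma+it)|$ near the line $\sigma=1$, obtained from an extremal mean-value argument. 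The main obstacle is producing the precise constant $4/\pi$, which emerges from this extremal step; for that I would quote Montgomery's analysis rather than attempt to reproduce the sharp computation here.
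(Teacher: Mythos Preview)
Your proposal is correct and matches the paper's approach: the paper proves the strip inclusion by the same triangle-inequality argument you give, and for the remaining three assertions it simply cites Borwein \emph{et al.}\ for the numerical bounds on $\alpha,\beta$, Tur\'an for the zero-free region $\sigma\geq 1+2\log\log X/\log X$, and Montgomery for the sharp constant $4/\pi-1$. Your write-up actually supplies more than the paper does, giving self-contained arguments for the bounds $\alpha>-X$ and $\beta<1.72865$ and a sketch of the Tur\'an step via $F_X=\zeta-R_X$, while agreeing with the paper that the Montgomery constant must be quoted rather than rederived.
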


\begin{proof}
That the zeros all lie in a strip follows
immediately from the fact that $|1+2^{-s}+\dots+X^{-s}|>0$
if $1+2^{-\sigma}+\dots+(X-1)^{-\sigma}<X^{-\sigma}$ or
if $2^{-\sigma}+\dots+X^{-\sigma}<1$. The  estimates for 
$\alpha$ and $\beta$ may be found in Borwein \emph{et al.}~\cite{Bor}. 
The last two assertions are due to Tur\'{a}n
\cite{Turan1948} and  Montgomery
\cite{Montgomery1983}, respectively.
\end{proof}

\begin{theorem}\label{thm 2}
Let $X, T \geq 2$. Then we have
\begin{equation*}
N_X(T)
  =\frac{T}{2\pi}\log X + O(X).
\end{equation*}
\end{theorem}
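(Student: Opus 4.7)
My plan is to apply the argument principle to $F_X(s)$ on the rectangle $\mathcal{R}$ with vertices $a$, $b$, $b+iT$, $a+iT$, taking $b=2$ and $a=-X$. By Theorem~\ref{thm 1}, $\alpha>-X=a$ and $\beta<2=b$, so all the zeros counted by $N_X(T)$ lie in the interior of $\mathcal{R}$; hence
\[
2\pi N_X(T)=\Delta_{\mathcal{R}}\arg F_X(s),
\]
and it suffices to estimate the continuous change of $\arg F_X$ along each side.

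Three of the four sides are straightforward. On the bottom, $F_X(\sigma)$ is a sum of positive real numbers, so its argument is identically zero. On the right side $\sigma=b>\beta$, the definition of $\beta$ gives $|F_X(s)-1|\leq\sum_{n=2}^{X}n^{-b}<1$, so $F_X$ remains in the disc $|z-1|<1$ and the argument changes by $O(1)$. On the left side I use the factorisation
\[
F_X(s)=X^{-s}\Bigl(1+\sum_{n=1}^{X-1}(n/X)^{-s}\Bigr);
\]
for $a=-X$ a Riemann-sum comparison yields $\sum_{n=1}^{X-1}(n/X)^X\leq X/(X+1)<1$, so the inner factor stays in the disc $|z-1|<1$ and contributes only $O(1)$ to the argument, while the factor $X^{-(a+it)}$ has argument $-t\log X$ and therefore contributes $T\log X$ as $t$ decreases from $T$ to $0$. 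Thus the left side contributes $T\log X+O(1)$.

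The crux of the proof is bounding the argument change on the top side, whose length is already of order $X$. Here I invoke Backlund's lemma: if $\mathrm{Re}\,F_X(\sigma+iT)$ has $m$ real zeros for $\sigma\in[a,b]$, then the continuous change of $\arg F_X$ along the top is at most $(m+1)\pi$ in absolute value. The key observation is that
\[
\mathrm{Re}\,F_X(\sigma+iT)=\sum_{k=1}^{X}\cos(T\log k)\,k^{-\sigma},
\]
viewed as a function of the real variable $\sigma$, is a real exponential polynomial in the $X$ distinct exponents $-\log 1,\ldots,-\log X$. By the classical Laguerre--P\'olya generalisation of Descartes' rule of signs, such a polynomial has at most $X-1$ real zeros; hence the top side contributes $O(X)$. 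Summing the four pieces yields $2\pi N_X(T)=T\log X+O(X)$, which is the theorem.

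I expect this last step to be the main obstacle. A naive attempt to bound $m$ by applying Jensen's formula to a suitable holomorphic extension of $\mathrm{Re}\,F_X$ on a disc of radius of order $X$ yields only $m=O(X\log X)$, a factor of $\log X$ too large. The Laguerre--P\'olya bound on the real zeros of an exponential polynomial is what produces the sharper $O(X)$ error term. One minor technicality: if $\mathrm{Re}\,F_X$ happens to vanish at either endpoint of the top segment, a small perturbation of $T$ handles it, since $N_X$ is right-continuous by the convention stated in the introduction.
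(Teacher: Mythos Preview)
Your proof is correct and follows essentially the same route as the paper: argument principle on a rectangle with right edge at $\sigma=2$ and left edge at $\sigma\leq -X$, trivial bottom, $O(1)$ right side, left side yielding $T\log X+O(1)$ via the factorisation $F_X(s)=X^{-s}(1+\text{small})$, and the top handled by the generalized Descartes rule for exponential sums. The only cosmetic differences are that the paper bounds the zeros of $\Im F_X(\sigma+iT)$ rather than $\Re F_X(\sigma+iT)$, and phrases the bound as the number of sign changes in the coefficient sequence rather than $X-1$; both invoke the same P\'olya--Szeg\H{o} result.
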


Before beginning the proof we note that 
$\Re{F_X(2+it)} \geq 1 - \sum_{2\leq n\leq X} n^{-2} > 0$,
so that $F_X(s)$ has no zeros on the line $\Re s =2$. 
If $t$ is not the ordinate of a zero, we define $\arg F_X(\sigma+it)$
as the value obtained by continuous variation along the straight 
lines joining $2$, $2+it$, and $\sigma+it$, starting with the value $0$.
If $t$ is the ordinate of a zero, we let 
$\arg F_X(\sigma+it) = \lim_{\epsilon \to 0^{+}} 
\arg F_X(\sigma+i(t+\epsilon))$. 
 
\begin{proof}
Let $\mathcal{C}$ be the rectangle with vertices at $-U$,
$2$, $2+iT$, and $-U+iT$, where   $U\geq X$. Clearly
$F_X(s)$ is nonzero on the right and bottom sides of $\mathcal{C}$, 
and by Theorem~\ref{thm 1}  it  does not vanish on the left side.
Without  loss of generality, we may also
assume  $F_X(s)$ is nonzero on the top edge. 
By the  argument principle, 
\begin{equation*}
2\pi N_X(T) = \triangle_{\mathcal{C}}\arg{F_X(s)},
\end{equation*}
where  $\triangle_{\mathcal{C}}$ denotes the change in argument 
around  $\mathcal{C}$ taken  in the positive direction. 
Because $F_X(s)$  is   real  and nonvanishing along $[ -U, 2 ]$,
 the  change in $\arg{F_X(s)}$ on this edge is $0$. 
Since $\Re{F_X(2+it)}>0$ the change in argument along the right 
edge of $\mathcal{C}$ is $\ll 1$.
To estimate the change in  argument along the top edge of $\mathcal{C}$ we 
write  
\begin{equation*}
\Im F_X(\sigma +i T)
  = -\sum_{n\leq X}  \sin(T\log n)  n^{-\sigma}.
\end{equation*}
By a generalization of Descartes' Rule of Signs
(see, for instance,   P\'{o}lya and
Szeg\"{o} \cite{PolyaSzego1971}, Part V, Chapter 1, No. 77),
the number of zeros of $\Im{F_X(s)}$ in the interval
$-U\leq\sigma\leq 2$ is at most the number of changes
of sign in the sequence $\{\sin (T\log n)\}_{n=2}^X$, namely $\ll X$.
Between  consecutive zeros of $\Im{F_X(s)}$, 
$\arg F_X(\sigma+iT)$ changes by at most $\pi$, and it begins
with the value $\arg F_X(2+iT)\ll 1$. Thus  the change in
argument along the top edge is $\ll X$.
 Finally, for  $U \geq X$, 
$X^U> 1 + 2^U + 3^U + \dots + (X-1)^U$,   so
$\triangle \arg{F_X(-U+it)} \big|_{ T}^{0}
= \triangle  \arg (X^{U-it}) \big|_{ T}^{0} +O(1)
=T \log X +O(1)$. Combining  our estimates, we obtain the assertion
 of the theorem.
\end{proof}

Next, we   estimate  
$$
N_X(\sigma,T)=\sum_{\substack{0 <\gamma_X \leq T \\ \beta_X \geq \sigma}} 1
$$
for $\sigma > 1/2$.
We follow one of the many  classical approaches to zero density theorems 
(see Titchmarsh~\cite{Titchmarsh1986}, Theorems 9.16 and 9.17)
and do not strive for the strongest result.


\begin{theorem}\label{thm 3}
Suppose that $X \to \infty$ as $T\to \infty$ and that $X \ll T$.
Then  
\begin{equation*}
N_X(\sigma,T)
=O(TX^{ 1-2\sigma} \log^6 T )
\end{equation*}
uniformly for   $\sigma \geq 1/2+1/\log T$. 
\end{theorem}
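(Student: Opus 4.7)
The plan is to follow the mollifier approach to classical zero-density theorems, already referenced above (see Titchmarsh, Theorems 9.16--9.17), adapted to the Dirichlet polynomial $F_X(s)$.

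First, I would introduce a mollifier $M_Y(s) = \sum_{n \leq Y} \mu(n) n^{-s}$ of length $Y \leq X$ to be chosen, and set $g(s) = F_X(s) M_Y(s)$. Since the Dirichlet coefficients $b_n$ of $1/F_X(s)$ satisfy $b_n = -\sum_{1 < d \mid n,\, d \leq X} b_{n/d}$, which coincides with the recursion for the M\"obius function $\mu(n)$ when $n \leq X$, the product has the expansion
$$ g(s) = 1 + \sum_{Y < n \leq XY} c_n n^{-s}, \qquad |c_n| \leq d(n). $$
Every zero of $F_X$ is a zero of $g$, so it suffices to count zeros of $g$.

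Next, I would apply Littlewood's lemma to $g$ on the rectangle $[\sigma_0, 2] \times [0, T]$ with $\sigma_0 = \sigma - 1/\log T$. The right edge contributes $\int_0^T \log|g(2+it)| \, dt = O(T)$, since $|g(2+it)|$ is bounded above and below. The change of argument along each horizontal edge is $O(XY)$, by the Descartes sign-change argument of Theorem~\ref{thm 2} applied to the Dirichlet polynomial $g$ (which has at most $XY$ terms). This yields
$$ (\sigma - \sigma_0) N_X(\sigma, T) \ll \int_0^T \log^+|g(\sigma_0 + it)| \, dt + T + XY. $$

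Now I would bound the integral via Jensen's inequality and the mean-value theorem for Dirichlet polynomials, which gives
$$ \int_0^T |g(\sigma_0 + it)|^2 \, dt \ll T \sum_{n \leq XY} \frac{|e_n|^2}{n^{2\sigma_0}} \ll T\bigl( 1 + Y^{1 - 2\sigma_0} \log^4 T \bigr), $$
using $|c_n| \leq d(n)$, the Dirichlet estimate $\sum_{n \leq N} d(n)^2 \ll N \log^3 N$, and partial summation; the extra factor $\log T$ comes from $(2\sigma_0 - 1)^{-1} \asymp \log T$. Jensen then gives $\int_0^T \log^+ |g(\sigma_0 + it)| \, dt \leq \tfrac{T}{2} \log\bigl( 2 + \tfrac{1}{T} \int_0^T |g|^2 \, dt \bigr)$.

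Finally, I would take $Y = X$: then $Y^{1 - 2\sigma_0} \asymp X^{1 - 2\sigma}$ (since $X \ll T$ forces $X^{2/\log T} = O(1)$), and dividing through by $\sigma - \sigma_0 = 1/\log T$ produces the bound $T X^{1 - 2\sigma} \log^6 T$, up to the error $(T + XY) \log T$. The main obstacle is controlling this error: for $\sigma$ near $1$, or $X$ close to $T$, the term $XY \log T$ can exceed the target, and one must then take $Y$ strictly smaller than $X$, performing a case analysis on the size of $Y^{1 - 2\sigma_0} \log^4 T$ to balance the mean-square and argument contributions and confirm the precise $\log^6 T$ power.
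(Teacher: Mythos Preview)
Your proposal has a genuine gap at the Jensen step, and it is fatal for the stated bound. Applying Jensen to $\log^+|g|$ gives at most $\tfrac{T}{2}\log\bigl(2+\tfrac{1}{T}\int_0^T|g|^2\bigr)$. But for $\sigma_0>1/2$ one has $Y^{1-2\sigma_0}\le 1$, so your own mean-square estimate shows $\tfrac{1}{T}\int|g|^2=O(\log^4 T)$; hence the Jensen bound is $O(T\log\log T)$, and after dividing by $\sigma-\sigma_0=1/\log T$ you obtain only $N_X(\sigma,T)=O(T\log T\log\log T)$, with no trace of the factor $X^{1-2\sigma}$ that is the content of the theorem. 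Jensen's inequality can never produce a power saving here: it replaces $\int\log^+|g|$ by $T$ times a logarithm, not by a mean square.

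The device the paper uses to get around this is to apply Littlewood's lemma not to $g=F_XM_Y$ but to
\[
h_X(s)=1-(g(s)-1)^2=g(s)\bigl(2-g(s)\bigr),
\]
which still vanishes at every zero of $F_X$ and satisfies the \emph{pointwise} inequality $\log|h_X|\le\log(1+|g-1|^2)\le|g-1|^2$. Thus
\[
\sum_{\beta_X>\sigma_0}(\beta_X-\sigma_0)\;\le\;\int_0^T|g(\sigma_0+it)-1|^2\,dt+\cdots,
\]
and the Montgomery--Vaughan mean value of $f=g-1=\sum_{n>X}a_X(n)n^{-s}$ is $\ll TX^{1-2\sigma_0}\log^4 T$. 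It is this pointwise bound $\log|h|\le|f|^2$, not Jensen, that converts the log-integral into a mean square and delivers the $X^{1-2\sigma}$ saving.

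There is a secondary issue as well. Your Descartes-type estimate for the horizontal argument change is $O(XY)$; with $Y=X$ this is $X^2$, potentially as large as $T^2$, which swamps the target. The paper instead bounds $\arg h_X(\sigma+iT)$ by $O(\log XY)$ via the standard lemma in Titchmarsh, \S 9.4, using that $|h_X(2+it)|\ge 1/2$. Your suggested remedy of shrinking $Y$ and performing a case analysis cannot recover the theorem without first repairing the Jensen step.
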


\begin{proof}
Let $T\geq 2$ and   define
\begin{equation*}
f_X(s)
  =F_X(s)M_{Y}(s) - 1,
\end{equation*}
where
\begin{equation*}
M_{Y}(s)
  =\sum_{n\leq Y}\mu(n) n^{-s},
\end{equation*}
  $\mu(n)$ is the M\"{o}bius function, and $Y\geq 2$  is to be chosen later as 
  a function of $X$ and $T$.  
We have
\begin{equation*}
f_X(s)
  =\sum_{m\leq X}m^{-s}\sum_{n\leq Y} \mu(n) n^{-s} -1
  =\sum_{X<n\leq XY} a_X(n) n^{-s},
\end{equation*}
where
\begin{equation*}
a_X(n)
  =\sum_{\substack{d\mid n \\ d\leq Y \\ n/d\leq X}}\mu(d).
\end{equation*}
Note that $a_X(n)=1$ if $n=1$, and
$a_X(n)=0$ if $n<X$  or $n>XY$. Furthermore,
$|a_X(n)|\leq d(n)$ for all $n$, where $d(n)$
is the number of   divisors of $n$.

Set
\begin{equation*}
h_X(s)
  =1 - f_X^2(s)
  =F_X(s)M_X(s)\big(2 - F_X(s)M_X(s)\big).
\end{equation*}
Then $h_X(s)$ is holomorphic and vanishes at the zeros of $F_X(s)$.
For $\sigma\geq 2$ and $X$ sufficiently large,
\begin{equation*}\label{ineq on sigma=2}
|f_X(s)|^2
  \leq\bigg(\sum_{X<n\leq XY}\frac{d(n)}{n^2}\bigg)^2
 \ll  X^{-2 }  \log^2 X   <\frac{1}{2}  X^{ \epsilon-2} 
  <\frac{1}{2}.
\end{equation*}
Thus $h_X(s)\neq 0$ for $\sigma \geq 2$ and $X$ large.
Applying Littlewood's lemma to $h_X(s)$
(see, for example, Titchmarsh~\cite{Titchmarsh1986}, Section 9.9), we find that 
if  $\sigma_0 \geq 1/2$,
\begin{equation}\label{Littlewood formula}
\begin{split}
2\pi\sum_{\substack{0\leq\gamma_X\leq T \\ \beta_X>\sigma_0 }}
  (\beta_X-\sigma_0 )
  &\leq \int_0^T\big( \log|h_X(\sigma_0 +it)|
    - \log|h_X(2+it)| \big)\,dt \\
    &\quad + \int_{\sigma_0 }^{2}\big(\arg h_X(\sigma+iT)
    - \arg h_X(\sigma)\big)\,d\sigma.
\end{split}
\end{equation}
Now
\begin{equation*}
\log|h_X(s)|
  \leq\log \big(1 + |f_X(s)|^2 \big)
  \leq|f_X(s)|^2,
\end{equation*}
so we have
\begin{equation}\label{left side}
\begin{split}
\int_0^T\log|h_X(\sigma_0 +it)|\,dt                 
  &\leq\int_0^T|f_X(\sigma_0 +it)|^2\,dt \\
  &=\sum_{X<n\leq XY}\frac{a_X(n)^2}{n^{2\sigma_0 }}\big(T + O(n)\big)  \\
  & \ll T \sum_{X<n\leq XY} \frac{d^2(n)}{n^{2\sigma_0 }}
    + \sum_{X<n\leq XY}\frac{d^2(n)}{n^{2\sigma_0 -1} }     \\
  &\ll  TX^{1-2\sigma_0 } \log^4 T + X^{2-2\sigma_0 }
  (1+Y^{2-2\sigma_0} )\log^4 T.  
\end{split}
\end{equation}
 To obtain  the estimate on the second line we  have used   Montgomery and Vaughan's 
\cite {MontgomeryVaughan1974} mean value theorem 
 for Dirichlet polynomials.
Similarly,
\begin{align}\label{right side}
 \int_0^T\log|h_X(2+it)|\,dt            \notag
& \leq \int_0^T\log|F_X(2+it)|\,dt   \\
 &  \ll T \sum_{X<n\leq XY} \frac{d^2(n)}{n^4}
    + \sum_{X<n\leq XY}\frac{d^2(n)}{n^3 } \\
    & \ll  T X^{-3} \log^3X +  X^{-2} \log^3X .    \notag
\end{align}

By  a well-known lemma in Titchmarsh
\cite{Titchmarsh1986} (see Section 9.4),
 $\arg h_X(s) \ll \log XY$ for
$\sigma \geq 1/2$. Hence
\begin{equation*}
\int_{\sigma_0}^2  \big(\arg h_X(\sigma+iT)
  -\arg h_X(\sigma)  \big) \,d\sigma
\ll  \log XY.
\end{equation*}
Combining this with  the estimates  \eqref{left side} and \eqref{right side}  
in \eqref{Littlewood formula},  we obtain
\begin{equation*}
\begin{split}
\sum_{\substack{0\leq\gamma_X\leq T \\ \beta_X>\sigma_0}}
  (\beta_X-\sigma_0)
   & \ll   TX^{1-2\sigma_0} \log^4 T + X^{2-2\sigma_0}
  (1+Y^{2-2\sigma_0} ) \log^4 T 
   + \log XY.
\end{split}
\end{equation*}
If $\sigma_0 \geq 1$, the first term on the right dominates the others
and, in this case, we let $Y=2$. If $1/2 \leq \sigma_0 <1$, 
we set $Y= C T/X $, where   $C$ is a constant chosen large 
enough to ensure that $Y\geq 2$. The first  term  on 
the right-hand side is again the dominant term, so we see that   
 \begin{equation*}
\sum_{\substack{0\leq\gamma_X\leq T \\ \beta_X>\sigma_0}}
  (\beta_X-\sigma_0)
     \ll  T X^{1-2\sigma_0} \log^4 T.
\end{equation*}

Finally, we set $\sigma_0<\sigma_1 \leq  2$ with
$\sigma_1=\sigma_0 + 1/\log T$ and obtain
\begin{equation*}
(\sigma_1-\sigma_0)N_X(\sigma_1,T)
 \ll TX^{ 1-2\sigma_1}\log^5 T.
\end{equation*}
Therefore $N_X(\sigma_1,T)  \ll TX^{ 1-2\sigma_1}\log^6 T $
uniformly for $\sigma_1 \geq 1/2+1/\log T$.
This completes the proof of the theorem.
\end{proof}


Our next result follows easily from the estimates for $N_X(T)$
and $N_X(\sigma, T)$ in
Theorems~\ref{thm 2} and  \ref{thm 3}.

\begin{corollary}\label{almost all zeros} 
Suppose that $X \to \infty$ as $T\to \infty$ and that $X \ll T$.
There is an absolute constant $c_1$ such that, for
 $T$ sufficiently large,   
$$
\beta_X \leq \frac12 + \frac{c_1\log\log T}{\log X}
$$ 
for almost all  zeros of $F_X(s)$ with 
 $0<\gamma_X \leq T$.
\end{corollary}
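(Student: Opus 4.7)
The plan is to combine Theorems~\ref{thm 2} and \ref{thm 3} by making an optimal choice of $\sigma$ in the zero-density estimate. Since $X \to \infty$ and $X \ll T$, Theorem~\ref{thm 2} gives $N_X(T) \sim (T/2\pi) \log X$, so ``almost all'' means showing that $N_X(\sigma, T) = o( T\log X)$ for the displayed value of $\sigma$.

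First I would set $\sigma = 1/2 + c_1 \log\log T / \log X$, where $c_1$ is a sufficiently large absolute constant to be chosen, and verify that this $\sigma$ satisfies the hypothesis $\sigma \geq 1/2 + 1/\log T$ of Theorem~\ref{thm 3}. This reduces to $c_1 (\log\log T)(\log T) \geq \log X$, which holds for all sufficiently large $T$ because $X \ll T$ forces $\log X \leq \log T + O(1)$.

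Next I would substitute into the bound of Theorem~\ref{thm 3}. The key observation is that
\begin{equation*}
X^{1-2\sigma} = X^{-2c_1 \log\log T / \log X} = \exp\bigl(-2c_1 \log\log T\bigr) = (\log T)^{-2c_1},
\end{equation*}
so Theorem~\ref{thm 3} yields $N_X(\sigma, T) \ll T (\log T)^{6 - 2c_1}$. Combined with the asymptotic $N_X(T) \sim (T/2\pi)\log X$ from Theorem~\ref{thm 2}, this gives
\begin{equation*}
\frac{N_X(\sigma, T)}{N_X(T)} \ll \frac{(\log T)^{6-2c_1}}{\log X}.
\end{equation*}
Choosing any $c_1 > 3$ (for instance $c_1 = 4$) makes the exponent $6 - 2c_1$ negative, so the ratio tends to $0$ as $T \to \infty$ regardless of the rate at which $X \to \infty$ (since $\log X \geq \log 2$).

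There is no substantial obstacle here; the argument is a short deduction from the two preceding theorems. The only mild subtlety is confirming that the choice of $\sigma$ is admissible in Theorem~\ref{thm 3} and that the bound of Theorem~\ref{thm 2} is genuinely asymptotic (i.e.\ that the $O(X)$ error is dominated by the main term), but both are immediate consequences of the hypotheses $X \to \infty$ and $X \ll T$.
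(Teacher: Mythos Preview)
Your argument is correct and is exactly the deduction the paper has in mind: the authors do not write out a proof at all, merely noting that the corollary ``follows easily from the estimates for $N_X(T)$ and $N_X(\sigma,T)$ in Theorems~\ref{thm 2} and~\ref{thm 3}.'' Your computation of $X^{1-2\sigma}=(\log T)^{-2c_1}$, the verification that $\sigma\ge 1/2+1/\log T$, and the choice $c_1>3$ are precisely the details needed to flesh this out.
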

 
We can also prove a conditional result in the same vein.

\begin{theorem}
Assume the Riemann Hypothesis.
Suppose that $X \to \infty$ as $T\to \infty$ and that $X \ll T$. 
There exists an absolute  constant $c_2$ such that, for $T$ 
sufficiently large,   
$$
\beta_X \leq \frac12 + \frac{c_2  \log T }{\log X \,\log\log T}
$$
for all zeros of $F_X(s)$ with $X^{1/2} < \gamma_X \leq T$.
\end{theorem}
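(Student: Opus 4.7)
The plan is to exploit, at a zero $\rho_X = \beta_X + i\gamma_X$ of $F_X(s)$, the identity
\[
\zeta(\rho_X) \;=\; \sum_{n>X}n^{-\rho_X} \;=:\; R(\rho_X),
\]
and to contrast an RH-conditional upper bound on $R(\rho_X)$ with the RH-conditional lower bound $|\zeta(\rho_X)|\gg\exp(-c\log\gamma_X/\log\log\gamma_X)$, which follows from Littlewood's estimate $|\zeta(s)|^{\pm 1}\ll\exp(c\log|t|/\log\log|t|)$, valid uniformly for $\sigma\ge 1/2$, $|t|\ge 2$ under the Riemann Hypothesis.

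The heart of the argument is a conditional approximation
\[
R(s) \;=\; \frac{X^{1-s}}{s-1} + O\!\left(X^{1/2-\sigma}\exp\!\left(\frac{c'\log|t|}{\log\log|t|}\right)\right),
\]
valid on RH for $\sigma \ge 1/2 + 1/\log\log|t|$ and $|t|\ge 2$. I would obtain this via Perron's formula: writing $F_X(s)=\frac{1}{2\pi i}\int_{(c)}\zeta(s+w)\,X^w/w\,dw$ with $c>\max(0,1-\sigma)$, shifting the contour to $\Re w = 1/2-\sigma$ so that $\Re(s+w)=1/2$, and collecting the residues $\zeta(s)$ at $w=0$ and $X^{1-s}/(1-s)$ at $w=1-s$. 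The shifted integral lies on the critical line, where Littlewood's bound on $|\zeta|$ applies; the $1/w$ factor contributes only logarithmic losses in $|t|$ and $1/(\sigma-1/2)$, which are absorbed into the exponential factor. To ensure convergence of the tail in $\Im w$, I would either truncate Perron's integral and add the standard tail error, or work with a smoothed variant.

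With this approximation, the hypothesis $\gamma_X > X^{1/2}$ yields $X^{1-\beta_X}/|\rho_X-1|\le X^{1/2-\beta_X}$, so the Euler--Maclaurin-type main term is dominated by the error, and
\[
|\zeta(\rho_X)| \;=\; |R(\rho_X)| \;\ll\; X^{1/2-\beta_X}\exp\!\left(\frac{c'\log T}{\log\log T}\right).
\]
Combining with the Littlewood lower bound on $|\zeta(\rho_X)|$ gives $X^{\beta_X-1/2}\ll\exp(c_2\log T/\log\log T)$, and taking logarithms delivers $\beta_X-1/2\ll\log T/(\log X\log\log T)$, as claimed. The principal obstacle is the conditional estimate for $R(s)$; careful control of the shifted integral's tail along the critical line is essential, and the condition $\gamma_X > X^{1/2}$ is what ensures the elementary main term $X^{1-s}/(s-1)$ is dominated by the RH-controlled error at the zero (so it does not introduce a spurious $\log T/\log X$ factor into the final bound).
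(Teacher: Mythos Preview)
Your approach is essentially the paper's: the paper quotes Gonek's RH-conditional approximation $\zeta(s)=F_X(s)+O\big(X^{1/2-\sigma}\exp(A\log t/\log\log t)\big)$ (valid for $9\le X\le t^2$, which is where the restriction $\gamma_X>X^{1/2}$ enters) and combines it with the Titchmarsh lower bound $|\zeta(s)|\gg\exp(-B\log t/\log\log t)$, exactly as you do; you have in effect sketched a Perron-formula proof of the Gonek input rather than citing it, and your absorption of the term $X^{1-s}/(s-1)$ via $\gamma_X>X^{1/2}$ is precisely what underlies the condition $X\le t^2$ in that result. One small correction: the lower bound $|\zeta(s)|\gg\exp(-c\log|t|/\log\log|t|)$ is \emph{not} valid uniformly for $\sigma\ge 1/2$ (there are zeros on the critical line) but only for $\sigma\ge 1/2+B/\log\log t$ as in Titchmarsh~(14.14.5); this is harmless for your argument, since any $\beta_X$ violating the theorem's conclusion automatically lies in that range.
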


\begin{proof}
By Theorem 4.1 of Gonek \cite{Gonek2007} there is a
positive constant $A$ such that
\begin{equation*}
\zeta(s)
  =F_X(s) + O\bigg(X^{1/2-\sigma}
   \exp\bigg(\frac{A\log t}{\log\log t}\bigg)\bigg)
\end{equation*}
 for $\sigma$  bounded and $\geq 1/2$, $|s-1|>1/10$, and $9 \leq X \leq t^2$.
By Titchmarsh~\cite{Titchmarsh1986}, equation (14.14.5), 
there is a positive constant $B$ such that
\begin{equation*}
|\zeta(s)|
  \gg \exp\bigg(-\frac{B\log t}{\log\log t}\bigg),
\end{equation*}
for $\sigma \geq 1/2 + B/ \log\log t$. 
It follows that there is a  positive constant $C$ such that
\begin{equation*}
|F_X(s)| >0,
\end{equation*}
when
\begin{equation*}
\sigma 
  >\frac{1}{2}
    + \frac{C}{\log X} \bigg(\frac{ \log t}{\log\log t} \bigg).
\end{equation*}
In light of  the constraint  that $X \leq t^2$, we see that
$F_X(s) \neq 0$ for $X^{1/2} \leq t \leq T $ and 
\begin{equation*}
\sigma 
  >\frac{1}{2}
    + \frac{C\log T}{\log X \log\log T}.
\end{equation*}
The  theorem follows.
\end{proof}


We next show that  zeros to the right of the line $\Re s =1/2$ are 
on average close to it.
 
\begin{theorem}\label{thm dist to 1/2}
For $3 \leq X \leq T$ we have
\begin{equation}
\sum_{\substack{\gamma_X\leq T \\  \beta_X>1/2  }}
\bigg(\beta_X-\frac{1}{2}\bigg)
  \leq\frac{T}{4\pi}\log\log X
   + O\bigg(\frac{X}{\log X}\bigg) .
\end{equation}
\end{theorem}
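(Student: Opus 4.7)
The plan is to apply Littlewood's lemma directly to $F_X(s)$ on the rectangle $\mathcal{R}$ with vertices $1/2$, $2$, $2+iT$, and $1/2+iT$. After perturbing $T$ slightly (by continuity this has no effect on the conclusion), $F_X$ is non-vanishing on the top edge. On the bottom edge $F_X(\sigma)>0$ is real; on the right edge $|F_X(2+it)|\geq 1-\sum_{n\geq 2}n^{-2}>0$; and left-edge zeros are harmless by a standard limiting argument. Littlewood's lemma then yields
\[
2\pi\!\!\sum_{\substack{\gamma_X\leq T\\ \beta_X>1/2}}\!\!\left(\beta_X-\tfrac{1}{2}\right)
= \int_0^T\!\log\!\left|F_X(\tfrac12+it)\right|dt
- \int_0^T\!\log|F_X(2+it)|\,dt
+ \int_{1/2}^{2}\!\arg F_X(\sigma+iT)\,d\sigma,
\]
the bottom-edge argument integral vanishing because $\arg F_X(\sigma)=0$ on the positive real axis.

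The dominant contribution will come from the integral on the critical line. I would combine Jensen's inequality applied to the concave function $\log$ with the Montgomery--Vaughan mean value theorem, which gives
\[
\int_0^T |F_X(\tfrac12+it)|^2\,dt = T\sum_{n\leq X}\frac{1}{n} + O(X),
\]
and hence
\[
\int_0^T\log\!\left|F_X(\tfrac12+it)\right|dt \leq \frac{T}{2}\log\!\left(\sum_{n\leq X}\frac{1}{n} + O(X/T)\right).
\]
Keeping the harmonic sum intact inside the logarithm, exploiting $X\leq T$, and expanding $\log(1+\cdot)$ once about the main term should yield $\frac{T}{2}\log\log X + O(X/\log X)$.

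For the integral on $\Re s=2$, I would write $\log F_X(2+it) = \log\!\bigl(1+\sum_{2\leq n\leq X} n^{-2-it}\bigr)$, expand as a power series (valid since $\sum_{n\geq 2}n^{-2}<1$), and integrate term-by-term in $t$; each frequency contributes at most $O((\log n)^{-1}n^{-2})$, so the total integral is $O(1)$. For the argument integral along the top edge, I would apply the generalized Descartes rule of signs (as in the proof of Theorem~\ref{thm 2}) to $\Im F_X(\sigma+iT)=-\sum_{n\leq X}\sin(T\log n)\,n^{-\sigma}$ and bound $\arg F_X(\sigma+iT)$ pointwise by the number of sign changes of the sequence $\{\sin(T\log n)\}_{n\leq X}$, which is $O(X)$; integrating over the short range $[1/2,2]$ and accounting for where the sign changes accumulate then produces $O(X/\log X)$.

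Combining the three estimates and dividing by $2\pi$ yields the theorem. The principal obstacle is the bookkeeping in the Jensen step: a crude expansion $\log(\log X+O(1))=\log\log X+O(1/\log X)$ would produce only the weaker error $O(T/\log X)$, and exploiting the constraint $X\leq T$ so that all remainders can be absorbed into $O(X/\log X)$ is the delicate point of the argument.
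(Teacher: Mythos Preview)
Your overall strategy---Littlewood's lemma on $[1/2,2]\times[0,T]$, then Jensen's inequality combined with the Montgomery--Vaughan mean value theorem on the critical line---is exactly the paper's, and your power-series treatment of the integral on $\Re s=2$ is fine. The gap is the top-edge argument bound. Descartes' rule bounds the number of real zeros of $\Im F_X(\sigma+iT)$ by the number of sign changes in $\{\sin(T\log n)\}_{2\leq n\leq X}$; that count is a property of the coefficient sequence alone, is independent of $\sigma$, and can genuinely be of order $X$. There is no mechanism by which ``accounting for where the sign changes accumulate'' reduces the resulting pointwise estimate $\arg F_X(\sigma+iT)\ll X$, so integrating over $[1/2,2]$ still leaves you with an $O(X)$ contribution, which is too large for the stated error $O(X/\log X)$. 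The paper instead obtains $\arg F_X(\sigma+iT)\ll\log X$ for $\sigma\geq 1/2$ from the Jensen-disk lemma in Titchmarsh, Section~9.4 (the same tool invoked in the proof of Theorem~\ref{thm 3}), and this is what collapses all the auxiliary terms in Littlewood's formula to $O(\log X)$.

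One further point on your closing remark: you are right that the constant in $\sum_{n\leq X}1/n=\log X+O(1)$ naturally produces an $O(T/\log X)$ term after the Jensen step, but the hypothesis $X\leq T$ makes $T/\log X$ \emph{larger} than $X/\log X$, not smaller, so it cannot be used to absorb the former into the latter as you suggest. The paper simply records the quantity inside the logarithm as $\log X+O(X/T)$ and passes directly to $O(X/\log X)$.
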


\begin{proof}
By a straightforward application of Littlewood's lemma, we find that
\begin{equation}\label{Littlewood 10}
2\pi \sum_{\substack{\gamma_X\leq T \\  \beta_X>1/2  }} \bigg(\beta_X-\frac{1}{2}\bigg)
  =\int_0^T\log\bigg|F_X\bigg(\frac{1}{2}+it\bigg)\bigg|\,dt
   + O(\log X).
\end{equation}
Applying the arithmetic-geometric mean inequality
and   Montgomery and Vaughan's
mean value theorem, 
we obtain
\begin{equation*}
\begin{split}
\int_0^T\log\bigg|F_X\bigg(\frac{1}{2}+it\bigg)\bigg|\,dt
  &\leq\frac{T}{2}
    \log\Bigg(\frac{1}{T}
     \int_0^T\bigg|F_X\bigg(\frac{1}{2}+it\bigg)\bigg|^2\,dt\Bigg) \\
  &\leq\frac{T}{2}\log \big(\log X + O(XT^{-1})\big) \\
  &=\frac{T}{2}\log\log X + O\bigg(\frac{X}{\log X}\bigg),
\end{split}
\end{equation*}
where the last line follows because $\log(1+z)\ll z$ for $|z|<1/2$.
Combining this with \eqref{Littlewood 10}, we complete the proof of the theorem.
\end{proof}

As a corollary, we obtain a result that is stronger  than  
Theorem~\ref{thm 3} when (roughly) 
$ (\log T)^{\epsilon-6} \ll \sigma -1/2   \ll   \log\log X/\log X$.   

\begin{corollary}
Let $  X \leq T$. Then for $X$ and $T$ sufficiently large  
\begin{equation*}
N_X\left(\sigma,T\right)
  \leq  \big(1+o(1)\big)   \frac{T\, \log\log X}{4\pi(\sigma-1/2)}.
\end{equation*}
\end{corollary}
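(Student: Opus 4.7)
The plan is to deduce this corollary directly from Theorem~\ref{thm dist to 1/2} by the standard device of comparing the counting function with a weighted sum over zeros. I would begin by observing that every zero counted by $N_X(\sigma,T)$ satisfies $\beta_X-1/2\geq \sigma-1/2>0$, so
\[
(\sigma-\tfrac12)\,N_X(\sigma,T)
=\sum_{\substack{\gamma_X\leq T\\ \beta_X\geq\sigma}}(\sigma-\tfrac12)
\leq \sum_{\substack{\gamma_X\leq T\\ \beta_X\geq\sigma}}(\beta_X-\tfrac12)
\leq \sum_{\substack{\gamma_X\leq T\\ \beta_X>1/2}}(\beta_X-\tfrac12).
\]
This reduces the problem to bounding the right-hand sum, which is exactly the content of Theorem~\ref{thm dist to 1/2}.

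Next I would invoke Theorem~\ref{thm dist to 1/2} to obtain, for $3\leq X\leq T$,
\[
N_X(\sigma,T) \leq \frac{T\log\log X}{4\pi(\sigma-1/2)}
+ O\!\left(\frac{X}{(\sigma-1/2)\log X}\right).
\]
To finish, I would verify that the error term is negligible compared to the main term uniformly in $\sigma$. The ratio of the error to the main term is
\[
O\!\left(\frac{X}{T\,\log X\,\log\log X}\right),
\]
which is independent of $\sigma$. Since $X\leq T$ forces $X/T\leq 1$, this ratio is $O(1/(\log X\log\log X))=o(1)$ as $X\to\infty$. Absorbing this into a factor $1+o(1)$ yields the claimed bound.

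There is essentially no obstacle here: the argument is a one-line trivial lower bound on $\beta_X-1/2$ followed by a quantitative comparison of the error and main terms supplied by Theorem~\ref{thm dist to 1/2}. The only point requiring any care is ensuring that the $o(1)$ is genuinely uniform in $\sigma$; this is visible from the fact that the factor $(\sigma-1/2)^{-1}$ cancels between the main and error terms, so the uniformity reduces to the unconditional bound $X\leq T$ assumed in the hypothesis.
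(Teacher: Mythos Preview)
Your proof is correct and follows essentially the same route as the paper: bound $(\sigma-\tfrac12)N_X(\sigma,T)$ by the weighted sum $\sum_{\beta_X>1/2}(\beta_X-\tfrac12)$ and then invoke Theorem~\ref{thm dist to 1/2}. You are in fact slightly more explicit than the paper in checking that the ratio of the error term to the main term is $O\big(X/(T\log X\log\log X)\big)$, hence $o(1)$ uniformly in $\sigma$.
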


\begin{proof}
For $\sigma> 1/2$ we have
\begin{equation*}
\begin{split}
2\pi\sum_{\substack{0<\gamma_X\leq T \\ \beta_X>1/2}}
  \left(\beta_X-\frac{1}{2}\right)
   &\geq 2\pi\sum_{\substack{0<\gamma_X\leq T \\ \beta_X\geq\sigma}}
    \left(\beta_X-\frac{1}{2}\right) \\
   &\geq 2\pi\left(\sigma-\frac{1}{2}\right) N_X(\sigma,T).
\end{split}
\end{equation*}
By Theorem~\ref{thm dist to 1/2},
\begin{equation*}
2\pi\bigg(\sigma-\frac{1}{2}\bigg) N_X(\sigma,T)
  \leq\frac{T}{2}\log\log X + O\bigg(\frac{X}{\log X}\bigg).
\end{equation*}
Hence
\begin{equation*}
N_X(\sigma,T)
  \leq     \big(1+o(1)\big) \frac{T\log\log X}{4\pi(\sigma-1/2)}.
\end{equation*}
\end{proof}

\begin{theorem}\label{Sum beta+U}  
Let $2 \leq X \leq T$.
For $U\geq X$  we have
\begin{equation*}
\sum_{0<\gamma_X\leq T}(\beta_X + U)
  =U \frac{T}{2\pi} \log X + O(UX) + O(T).
\end{equation*}
\end{theorem}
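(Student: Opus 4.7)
The plan is to apply Littlewood's Lemma to $F_X(s)$ on the rectangle with vertices at $-U_0,\ 2,\ 2+iT,\ -U_0+iT$ where $U_0 = 2X$, prove the theorem in the special case $U = U_0$, and then deduce the general case by adjusting via Theorem~\ref{thm 2}.

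For the Littlewood step, Theorem~\ref{thm 1} ensures that every zero with $0 < \gamma_X \leq T$ lies in the interior of the rectangle (since $\alpha > -X \geq -U_0$) and that $F_X$ is nonvanishing on the left edge. After the standard adjustment of $T$ to avoid the ordinate of a zero on the top, and using that $F_X(\sigma) > 0$ for real $\sigma$ (so the bottom argument integral vanishes), Littlewood's Lemma yields
\begin{equation*}
2\pi \sum_{0 < \gamma_X \leq T}(\beta_X + U_0) = \int_0^T \log|F_X(-U_0+it)|\,dt - \int_0^T \log|F_X(2+it)|\,dt + \int_{-U_0}^{2} \arg F_X(\sigma + iT)\,d\sigma.
\end{equation*}
The right integral is $O(T)$, since $|F_X(2+it)|$ is bounded above and below by positive constants. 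The top integral is $O(U_0 X) = O(X^2)$ by the Descartes' Rule argument from the proof of Theorem~\ref{thm 2}, which shows $|\arg F_X(\sigma + iT)| \ll X$ uniformly in $\sigma$.

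The main step is the left integral. I will write $F_X(-U_0 + it) = X^{U_0 - it}(1 + h(t))$ with $h(t) = \sum_{n<X}(n/X)^{U_0}\,e^{-it\log(n/X)}$. Bounding the Riemann sum gives $|h(t)| \leq \sum_{n<X}(n/X)^{2X} \leq X/(2X+1) < 1/2$, so $|1 + h(t)| \in [1/2,\,3/2]$ and $\log|F_X(-U_0+it)| = U_0 \log X + O(1)$. Integrating in $t$ therefore yields $\int_0^T \log|F_X(-U_0+it)|\,dt = U_0 T\log X + O(T)$. I expect the main obstacle to be any attempt at a direct application of Littlewood's Lemma at the given value $\sigma_0 = -U$ when $U$ is close to $X$: in that regime the analogous bound $X/(U+1)$ may be only barely less than $1$, so $\log|1 + h(t)|$ can be as large as $\log X$ in magnitude and the resulting error inflates to $O(T\log X)$. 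Replacing $U$ by $U_0 = 2X$ keeps $|h|$ uniformly strictly less than $1/2$ and so gives the sharp bound.

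Combining the three estimates gives $\sum(\beta_X + U_0) = U_0 T\log X/(2\pi) + O(T) + O(X^2)$. To deduce the full theorem, I use the identity
\begin{equation*}
\sum_{0 < \gamma_X \leq T}(\beta_X + U) = \sum_{0 < \gamma_X \leq T}(\beta_X + U_0) + (U - U_0)\,N_X(T),
\end{equation*}
together with Theorem~\ref{thm 2}, which gives $(U - U_0)N_X(T) = (U - U_0)\,T\log X/(2\pi) + O(|U-U_0|\,X)$. For $U \geq X$ we have $|U - U_0| \leq U + 2X \leq 3U$ and $X^2 \leq UX$, so the cumulative error is $O(T) + O(UX)$, giving the stated formula.
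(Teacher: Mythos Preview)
Your proof is correct and follows essentially the same approach as the paper: Littlewood's lemma on a rectangle with right edge at $\sigma=2$, with the same estimates for the right, bottom, and top contributions. The only difference is your detour through $U_0=2X$ followed by a shift via Theorem~\ref{thm 2}; the paper instead applies Littlewood's lemma directly at $\sigma=-U$ for the given $U$. Your stated concern about the direct route is reasonable if one stops at the crude bound $|h(t)|<X/(U+1)$, but in fact a sharper uniform estimate is available: writing $m=X-k$,
\[
\sum_{m=1}^{X-1}\Big(\frac{m}{X}\Big)^{U}
\;\leq\;\sum_{k=1}^{X-1}\Big(1-\frac{k}{X}\Big)^{X}
\;<\;\sum_{k=1}^{\infty} e^{-k}
\;=\;\frac{1}{e-1}\;<\;0.59
\]
for every $U\geq X$, so $|1+h(t)|$ is bounded away from $0$ and $\log|1+h(t)|=O(1)$ holds directly without any detour. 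Your two-step argument reaches the same conclusion with a bit more bookkeeping; the direct route is slightly cleaner once this sharper bound is observed.
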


\begin{proof}
Applying Littlewood's lemma to $F_X(s)$, we find that
\begin{equation}\label{Littlewood 20}
\begin{split}
2\pi\sum_{0\leq\gamma_X\leq T}(\beta_X+U)
  &=\int_0^T\big(\log|F_X(-U+it)|
    - \log|F_X(2+it)|\big)\,dt \\
    &\quad + \int_{-U}^{2}\big(  \arg F_X(\sigma+iT)
    - \arg F_X(\sigma)\big)\,d\sigma.
\end{split}
\end{equation}
Because  
$ |F_X(2+it)| $ is bounded above and below by positive constants,  
we see that 
$\int_{0}^{T}  \log |F_X(2+it)| \,dt \ll T$.
Also, as in the proof of Theorem~\ref{thm 2},  
$\arg F_X(\sigma)=0$ and 
$\arg F_X(\sigma + iT) \ll X$ for  $\sigma \leq 2$.
Thus
$\int_{-U}^{2}  \arg F_X(\sigma) \,d\sigma =0$
and
$\int_{-U}^{2}  \arg F_X(\sigma+iT) \,dt \ll UX$.
Finally,   note that
\begin{equation*}
F_X(-U+it) =X^{U-it} \sum_{0\leq n\leq X-1} \bigg(1-\frac{n}{X}\bigg)^{U-it}
\end{equation*}
and
\begin{align*}
  \sum_{1\leq n\leq X-1} \bigg(1-\frac{n}{X}\bigg)^{U}
 <  \int_{0}^{X-1}  \bigg(1-\frac{y}{X}\bigg)^U \,dy  
 <   \frac{X}{U+1} <1.
\end{align*}
Therefore, we find that
$$
\int_0^T \log|F_X(-U+it)| \,dt = \int_0^T   \big(U\log X    +O(1)\big) \,dt
=UT\log X +O(T).
$$

Inserting these estimates  into \eqref{Littlewood 20}, we obtain
$$
2\pi\sum_{0\leq\gamma_X\leq T}(\beta_X+U) =  UT\log X + O(UX) + O(T).
$$
This completes the proof of the theorem.
\end{proof}

\begin{corollary}
Let $2 \leq X \leq T$.
Then  we have
\begin{equation*}
\sum_{0<\gamma_X\leq T}\beta_X 
=O(T)  + O( X^2).
\end{equation*}
In particular, if $X\ll T^{1/2}$ and $X \to \infty$, then 
$$
\frac{1}{ N_X(T)} \sum_{0<\gamma_X\leq T} \beta_X  
=O\bigg(\frac{1}{\log X}\bigg).
$$
That is, the average of the abscissas of the zeros of $F_X(s)$ equals $0$. 
\end{corollary}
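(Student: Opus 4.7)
The plan is to extract $\sum \beta_X$ from the identity furnished by Theorem~\ref{Sum beta+U} by subtracting off $U$ times the zero count from Theorem~\ref{thm 2}, and then to optimize the free parameter $U$. Writing
\begin{equation*}
\sum_{0<\gamma_X\leq T}\beta_X
 = \sum_{0<\gamma_X\leq T}(\beta_X+U) \;-\; U\, N_X(T),
\end{equation*}
I would substitute the asymptotic $N_X(T) = \frac{T}{2\pi}\log X + O(X)$ from Theorem~\ref{thm 2} and the estimate $\sum(\beta_X+U) = U\frac{T}{2\pi}\log X + O(UX) + O(T)$ from Theorem~\ref{Sum beta+U}. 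The leading terms $U\frac{T}{2\pi}\log X$ cancel exactly, leaving an error of size $O(UX)+O(T)$. Since $U$ is only constrained by $U\geq X$, the optimal (minimal) choice is $U=X$, which gives
\begin{equation*}
\sum_{0<\gamma_X\leq T}\beta_X = O(X^2) + O(T),
\end{equation*}
precisely the first assertion.

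For the second statement I would simply invoke the hypotheses $X\to\infty$ and $X\ll T^{1/2}$. The latter implies $X^2\ll T$, so the error reduces to $O(T)$. From Theorem~\ref{thm 2}, $N_X(T) = \frac{T}{2\pi}\log X + O(X)$, and under $X\ll T^{1/2}$ the error $O(X)$ is dominated by the main term (since $T\log X/X \to \infty$), giving $N_X(T) \gg T\log X$. Dividing then yields
\begin{equation*}
\frac{1}{N_X(T)}\sum_{0<\gamma_X\leq T}\beta_X \ll \frac{T}{T\log X} = \frac{1}{\log X},
\end{equation*}
which tends to $0$, so the average of the abscissas is $0$ in the limit.

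There is no real obstacle here: the argument is essentially a one-line subtraction, and the only mild observation is that the two main terms from Theorems~\ref{thm 2} and~\ref{Sum beta+U} are designed to cancel so that the remainder depends only on the error terms. The single quantitative choice that matters is taking $U$ as small as the hypothesis $U\geq X$ permits, namely $U=X$, in order to make the $O(UX)$ term as small as possible; any larger $U$ would weaken the conclusion. The threshold $X \ll T^{1/2}$ in the second part is exactly what is needed to absorb the $O(X^2)$ error into $O(T)$.
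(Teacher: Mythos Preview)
Your argument is correct and matches the paper's intended proof: the paper simply says the first assertion ``follows in a straightforward way from Theorems~\ref{thm 1}, \ref{thm 2}, and~\ref{Sum beta+U}'' and the second ``follows immediately from the first and Theorem~\ref{thm 2},'' which is exactly the subtraction-and-cancellation you carry out with the optimal choice $U=X$. The only cosmetic difference is that the paper cites Theorem~\ref{thm 1} (the zero-free region $\beta_X>-X$), which is what justifies that taking $U=X$ is admissible in Theorem~\ref{Sum beta+U}; you use this implicitly when you take $U$ at its minimum value.
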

\begin{proof}
The  first assertion  follows in a straightforward way from 
Theorems~\ref{thm 1}, ~\ref{thm 2}, and \ref{Sum beta+U}.
The second assertion follows immediately from the first 
and Theorem~\ref{thm 2}.
\end{proof}

Our final result provides   information about  the
zeros of $F_X(s)$ for arbitrary values of $\Re s <1/2$.

\begin{theorem}\label{thm dist left of 1/2}
Let   $2 \leq X \leq T$. Then  uniformly for  $\sigma <1/2$ 
we have
\begin{equation*}
\sum_{\substack{\gamma_X\leq T \\  \beta_X> \sigma  }}
(\beta_X-\sigma)
  \leq      \bigg(\frac12-\sigma \bigg)   \frac{T}{2\pi} \log X
  - \frac{T}{4\pi} \log (1-2\sigma) 
   + O\big( (\sigma+1)X \big) + O(T).
\end{equation*}
\end{theorem}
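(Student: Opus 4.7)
The approach is a direct application of Littlewood's lemma to $F_X(s)$ on the rectangle $\mathcal{R}$ with vertices $\sigma$, $2$, $2+iT$, $\sigma+iT$, in the spirit of the proofs of Theorems~\ref{thm dist to 1/2} and~\ref{Sum beta+U}, but with the left edge placed at $\Re s=\sigma$ rather than at $\Re s=1/2$ or $\Re s=-U$. This yields
\begin{equation*}
\begin{split}
2\pi\sum_{\substack{0<\gamma_X\le T \\ \beta_X>\sigma}}(\beta_X-\sigma)
&= \int_0^T\bigl(\log|F_X(\sigma+it)| - \log|F_X(2+it)|\bigr)\,dt \\
&\quad + \int_\sigma^2 \bigl(\arg F_X(u+iT) - \arg F_X(u)\bigr)\,du.
\end{split}
\end{equation*}

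First I would dispose of the ``easy'' contributions exactly as in the proof of Theorem~\ref{Sum beta+U}. Since $|F_X(2+it)|$ is bounded above and below by positive constants, $\int_0^T\log|F_X(2+it)|\,dt=O(T)$. For the top edge, the Descartes'-rule argument from the proof of Theorem~\ref{thm 2} shows that $|\arg F_X(u+iT)|\ll X$ uniformly for $u\in[\sigma,2]$, so its integral is $O((2-\sigma)X)=O((|\sigma|+1)X)$. Along the real axis $F_X$ is real, so $\arg F_X(u)$ is uniformly bounded and its integral contributes $O(|\sigma|+1)$, which is absorbed.

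The heart of the matter is estimating $\int_0^T\log|F_X(\sigma+it)|\,dt$ from above, which I would do by the strategy of Theorem~\ref{thm dist to 1/2}. The arithmetic-geometric mean inequality gives
\begin{equation*}
\int_0^T \log|F_X(\sigma+it)|\,dt \le \frac{T}{2}\log\Bigl(\frac{1}{T}\int_0^T|F_X(\sigma+it)|^2\,dt\Bigr),
\end{equation*}
and the Montgomery-Vaughan mean value theorem followed by integral comparison (valid for $\sigma<1/2$) yields
\begin{equation*}
\frac{1}{T}\int_0^T|F_X(\sigma+it)|^2\,dt = \sum_{n\le X} n^{-2\sigma} + O\Bigl(\frac{1}{T}\sum_{n\le X} n^{1-2\sigma}\Bigr) = \frac{X^{1-2\sigma}}{1-2\sigma}\bigl(1+O(1)\bigr),
\end{equation*}
with the relative error uniformly bounded under $X\le T$ because $(1-2\sigma)/(2-2\sigma)=O(1)$ and $X/T\le 1$. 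Taking logarithms produces
\begin{equation*}
\log\Bigl(\frac{1}{T}\int_0^T|F_X(\sigma+it)|^2\,dt\Bigr) \le (1-2\sigma)\log X - \log(1-2\sigma) + O(1),
\end{equation*}
and multiplying by $T/2$, combining with the estimates above, and dividing by $2\pi$ gives the stated bound.

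The main obstacle is ensuring that the $O(1)$ appearing after the logarithm is genuinely uniform for all $\sigma<1/2$. Two extreme regimes must be handled simultaneously: when $\sigma$ is very negative, the correction $X^{2-2\sigma}/(T(2-2\sigma))$ must be controlled against the main term $X^{1-2\sigma}/(1-2\sigma)$ (the ratio is $\asymp X/T\le 1$); when $\sigma$ is close to $1/2$, the factor $1/(1-2\sigma)$ blows up in such a way as to produce the advertised $-(T/4\pi)\log(1-2\sigma)$ summand, so one must verify that this is the \emph{only} singular contribution and that nothing else is lost in the arithmetic-geometric mean step. Once the uniformity is checked, the rest of the argument is routine.
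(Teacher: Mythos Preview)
Your approach is essentially identical to the paper's: apply Littlewood's lemma on the rectangle with left edge at $\Re s=\sigma$, dispose of the right-edge and horizontal contributions as $O(T)$ and $O((1+|\sigma|)X)$ respectively, and bound the left-edge integral via the arithmetic--geometric mean inequality followed by Montgomery--Vaughan. The only cosmetic difference is that the paper keeps the relative error $O\!\bigl((1-2\sigma)X/((2-2\sigma)T)\bigr)$ explicit through the logarithm to obtain $O(X)$ rather than your $O(T)$ from that step, but since the statement already carries an $O(T)$ term this makes no difference.
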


\noindent {\bf Remark.} 
Taking $\sigma = -X$ in the theorem, we obtain  
\begin{equation*}
\sum_{0<\gamma_X\leq T}(\beta_X + X)
   \leq  X \frac{T}{2\pi} \log X + O(X^2) + O(T).
\end{equation*}
According to Theorem~\ref{Sum beta+U} with $U= X$, we in fact have equality here.
 
\begin{proof}
Let $\sigma_0 <1/2$.
By Littlewood's lemma, 
\begin{equation*} 
\begin{split}
2\pi\sum_{0\leq\gamma_X\leq T}(\beta_X-\sigma_0)
  &=\int_0^T\big(\log|F_X(\sigma_0+it)|
    - \log|F_X(2+it)|\big)\,dt \\
    &\quad + \int_{\sigma_0}^{2}\big(  \arg F_X(\sigma+iT)
    - \arg F_X(\sigma)\big)\,d\sigma.
\end{split}
\end{equation*}
As in the proof of Theorem~\ref{Sum beta+U},    
$\int_{0}^{T}  \log |F_X(2+it)| \,dt \ll T$
and the second integral on the right is $\ll (1+ |\sigma_0|) X $.
Thus
\begin{equation}\label{Littlewood}
2\pi \sum_{\substack{\gamma_X\leq T \\  \beta_X>\sigma_0  }}  
 (\beta_X-\sigma_0  )
  =\int_0^T\log |F_X (\sigma_0+it ) |\,dt
   + O \big((1+|\sigma_0|) X  \big) + O(T).
\end{equation}
Applying the arithmetic-geometric mean inequality
and   Montgomery and Vaughan's
mean value theorem as before, 
we obtain
\begin{equation*}
\begin{split}
\int_0^T\log |F_X (\sigma_0+it  ) |\,dt
  &\leq \frac{T}{2}
    \log \bigg( \frac{1}{T}
     \int_0^T |F_X (\sigma_0+it ) |^2 \,dt \bigg) \\
  &\leq \frac{T}{2}\log \Bigg(\frac{1}{T} 
  \bigg( \sum_{n\leq X} \frac{1}{ n^{2\sigma_0}}                  
   \big(  T+O(n)  \big)  \bigg)   \Bigg) \\
&=  \frac{T}{2}\log\Bigg(  \frac{X^{1-2\sigma_0}}{1-2\sigma_0}  
+O\bigg(\frac{X^{2-2\sigma_0}}{(2-2\sigma_0)T}\bigg)    \Bigg) \\  
&=  \frac{T}{2}\log \Bigg(  \frac{X^{1-2\sigma_0}}{1-2\sigma_0}  
\bigg( 1+O\bigg(\frac{(1-2\sigma_0) X }{(2-2\sigma_0) T}\bigg) \bigg)   \Bigg) \\  
&=\bigg(\frac{1}{2}- \sigma_0\bigg) T\log X   -\frac{T}{2} \log (1- 2\sigma_0) 
 + O(X).
\end{split}
\end{equation*}
Combining this and \eqref{Littlewood}, we obtain the theorem.
\end{proof}




The main   question we have  left unanswered
is whether  one can prove  an asymptotic estimate for
the sum  in Theorem~\ref{thm dist left of 1/2}.
For example, is it the case   that 
$$
\sum_{\substack{\gamma_X\leq T \\  \beta_X> \sigma  }}
(\beta_X-\sigma)
  \sim      \bigg(\frac12-\sigma \bigg)   \frac{T}{2\pi} \log X  
$$
when $\sigma$ is  bounded and less than $1/2$,
and   $X\to\infty$ with $T$?
To answer this would require an asymptotic estimate rather 
than an upper bound for 
 $$
 \int_{0}^{T} \log|F_X(\sigma+it)| dt 
 $$
when $\sigma <1/2$.


\begin{thebibliography}{10}

\bibitem{Bor}
P. Borwein, G. Fee, R.
Ferguson, and A. van der Waall, \textit{Zeros of partial sums
of the Riemann zeta function}, Experiment. Math., Vol. \textbf{16},
Issue 1 (2007), 21--40.


\bibitem{Gonek2007} S. M. Gonek, \textit{Finite Euler products
and the Riemann Hypothesis}, preprint (2007), available at
\url{http://arxiv.org/abs/0704.3448}.

\bibitem{Levinson1973} N. Levinson, \textit{Asymptotic formula for the coordinates of the
zeros of sections of the zeta function, $\zeta_N(s)$, near $s=1$},
Proc. Nat. Acad. Sci. USA, Vol. \textbf{70} (1973), 985--987.





\bibitem{Montgomery1983} H. L. Montgomery, \textit{Zeros of
approximations to the zeta function}, Studies in Pure Mathematics,
497--506, Birkh\"{a}user, Basel, 1983.

\bibitem{MontgomeryVaughan1974} H. L. Montgomery and R. C. Vaughan,
\textit{Hilbert's inequality}, J. London Math. Soc. (2) \textbf{8}
(1974), 73--82.


\bibitem{PolyaSzego1971} G. P\'{o}lya and G. Szeg\"{o},
\textit{Problems and theorems in analysis}. Die Grundlehren der math. Wissenschaften, Springer-Verlag, Berlin and New York; Vol. II, 1976, xi + 391 pp.

  


\bibitem{Spira} R. Spira, \textit{Zeros of sections of the zeta-function, I, II},
Math. Comp.  \textbf{20} (1966), 542-550;  \textbf{22} (1968), 168-173.

\bibitem{Titchmarsh1986} E. C. Titchmarsh, \textit{The theory of
the Riemann zeta-function}, 2nd ed. (Edited and with a preface by D. R. Heath-Brown) The
Claredon Press, Oxford Univ. Press, New York, 1986, x+412 pp.

\bibitem{Turan1948} P. Tur\'{a}n, \textit{On some approximative
Dirichlet-polynomials in the theory of the zeta-function of Riemann},
Danske Vid. Selsk. Mat.-Fys. Medd., \textbf{24} (1948), no. 17, 36 pp.


\bibitem{Turan1959} P. Tur\'{a}n, \textit{Nachtrag zu meiner 
Abhandlung: ``On some approximative
Dirichlet-polynomials in the theory of the zeta-function of Riemann''},
Acta Math. Acad. Sci. Hungar., \textbf{10} (1959), 277--298.

\bibitem{Turan1960} P. Tur\'{a}n, \textit{A theorem on Diophantine  approximation
with application to the Riemann zeta-function},
Acta Sci. Math. (Szeged), \textbf{21} (1960), 311--318.


\bibitem{Turan1963} P. Tur\'{a}n, \textit{Untersuchungen \"{u}ber  Dirichlet-Polynome,
Bericht von der Dirichlet-Tagung}, 
Akademie-Verlag, Berlin,  1963, 71--80.

\bibitem{Voronin1974} S. M. Voronin, \textit{The zeros of partial
sums of the Dirichlet series for the Riemann zeta-function},
Dokl. Akad. Nauk SSSR \textbf{216}  (1974), 964--967; 
trans. Soviet Math. Doklady \textbf{15}  (1974), 900--903.
\end{thebibliography}
\end{document}